\theoremstyle{plain}
\newtheorem{lemma}{Lemma}[section]
\newtheorem{theorem}[lemma]{Theorem}
\newtheorem{cor}[lemma]{Corollary}
\newtheorem{prop}[lemma]{Proposition}
\newtheorem{exam}[lemma]{\normalfont \scshape
 Example}
\newcommand{\R}{\mathbb{R}}
\newcommand{\N}{\mathbb{N}}
\newcommand{\B}{\mathbb{B}}
\newcommand{\norm}[1]{\left\Vert#1\right\Vert}
\newcommand{\abs}[1]{\left\vert#1\right\vert}
\newcommand{\set}[1]{\left\{#1\right\}}
\newcommand{\bfV}{\bm{V}}
\newcommand{\bfX}{\bm{X}}
\newcommand{\bfZ}{\bm{Z}}
\newcommand{\bfeta}{\bm{\eta}}
\newcommand{\barE}{\bar E^-[0,1]}
\newcommand{\barC}{\bar C^-[0,1]}
\begin{document}

\title[Local Asymptotic Normality]{Local Asymptotic Normality in $\delta$-Neighborhoods of Standard Generalized Pareto Processes}%
\author{Stefan Aulbach and Michael Falk}
\address{University of W\"{u}rzburg \\
Institute of Mathematics\\
Emil-Fischer-Str.~30\\
97074~W\"{u}rzburg, Germany} \email{stefan.aulbach@uni-wuerzburg.de\\
falk@mathematik.uni-wuerzburg.de}

\thanks{The first author was supported by DFG Grant FA 262/4-1.}
\subjclass[2010]{Primary 62M99, secondary 60G70, 62F12}%
\keywords{Functional extreme value theory, extreme value process,  generalized Pareto process, point process of exceedances, local asymptotic normality, regular estimator sequence, asymptotic efficiency}%

% ----------------------------------------------------------------
\begin{abstract}
\Citet{dehap06} provided models for spatial extremes in the case of stationarity, which
depend on just one parameter $\beta>0$ measuring tail dependence, and they proposed
different estimators for this parameter. This framework was supplemented in \citet{falk11}
by establishing  local asymptotic normality (LAN) of a corresponding point process of
exceedances above a high multivariate threshold, yielding in particular asymptotic efficient
estimators.

The estimators investigated in these papers are based on a finite set of points
$t_1,\dots,t_d$, at which observations are taken. We generalize this approach in the
context of functional extreme value theory (EVT). This more general framework allows
estimation over some spatial parameter space, i.\,e., the finite set of points $t_1,\dots,t_d$
is replaced by $t\in [a,b]$. In particular, we derive efficient estimators of $\beta$ based on
those processes in a sample of iid processes in $C[0,1]$ which exceed a given threshold
function.
\end{abstract}

\maketitle

\section{Introduction}

Suppose that the stochastic process $\bfV=(V_t)_{t\in[0,1]}\in
C[0,1]$ is a \textit{standard generalized Pareto process} (GPP) (\citet{buihz08}),
i.e., there exists $x_0>0$ such that
\[
P(\bfV\le f)=1+\log(G(f)),\qquad f\in\barE,\,\norm f_\infty\le x_0,
\]
where $\barE$ is the set of those bounded functions on $[0,1]$ that attain only nonpositive
values and which have a finite set of discontinuities. By $G$ we denote the functional
distribution function (df) of a \textit{standard extreme value process} (EVP)
$\bfeta=(\eta_t)_{t\in[0,1]}\in C[0,1]$, i.e.,
\[
G(f)=P(\bfeta\le f),\qquad f\in \barE,
\]
$P(\eta_t\le x)=\exp(x)$, $x\le 0$, $t\in[0,1]$,  and $\bfeta$ is \textit{max-stable}:
\[
P\left(\bfeta\le \frac fn\right)^n=P(\bfeta\le f),\qquad f\in\barE,\,n\in\N.
\]
All operations on functions such as $\le$, multiplication with a
constant etc.  are meant componentwise. For random functions,
i.e., stochastic processes such as $\bfV$, $\bfeta$ we use bold
letters, to distinguish these from nonrandom functions such as
$f$.

\Citet{dehap06} provided models for spatial extremes in the case of stationarity, which
depend on just one parameter $\beta>0$ measuring tail dependence, and they proposed
different estimators for this parameter. This framework was supplemented in \citet{falk11}
by establishing  local asymptotic normality (LAN) of a corresponding point process of
exceedances above a high multivariate threshold.

Precisely, it is assumed that for any $x_1,\dots,x_d\le 0$, $d\in\N$,
\begin{align} \label{eqn:finite_distribution_of_EVP}
P(\eta_{t_j}\le x_j,\,1\le j\le d)
&=\exp\left(-\int_{-\infty}^\infty\max_{j\le
d}\abs{x_j}\psi_\beta(s-t_j)\,ds\right),
\end{align}
where $\psi_\beta(s)=\beta\psi(\beta s)$ with a scale parameter $\beta>0$, and $\psi$ is a
continuous probability density  on $\R$ with $\psi(s)=\psi(-s)>0$ and $\psi(s)$, $s\ge 0$,
decreasing.

In the papers by \citet{dehap06} and \citet{falk11} the density $\psi$ is known and the
parameter $\beta$ is estimated. The estimators investigated in these papers are based on a
finite set of points $t_1<\dots<t_d$; estimation over some interval $t\in[a,b]$ seems to be
an open problem. This is the content of the present paper, which is organized as follows. In
Section~\ref{sec:auxiliary_results_and_tools} we compile some auxiliary results and tools, in
particular from functional extreme value theory (EVT). In
Section~\ref{sec:estimation_of_beta} we introduce our estimator of $\beta$ and establish
its asymptotic normality under the condition that the underlying observations
$\bfV^{(1)},\dots,\bfV^{(n)}$ are independent copies of a standard GPP $\bfV$. Local
asymptotic normality (LAN) of a corresponding point process of exceedances above a high
constant threshold function is established in Section~\ref{sec:lan_of_exceedances}. This is
achieved under the condition that the underlying observations are in a
$\delta$-neighborhood of a standard GPP. As an application we obtain from LAN-theory that
our estimator of $\beta$ is asymptotically efficient in this setup. For an account of
functional EVT we refer to \citet{dehaf06}; for a supplement including in particular basics of
GPP we refer to \citet{aulfaho11}.

\section{Auxiliary Results and Tools} \label{sec:auxiliary_results_and_tools}

In this section we compile several auxiliary results and tools. We start with the functional df
of a standard EVP $\bfeta\in C[0,1]$, whose finite dimensional marginal distributions (fidis)
are given by equation \eqref{eqn:finite_distribution_of_EVP}.

\begin{lemma} \label{lem:representation_of_df_of_EVP}
We have for any $f\in\barE$
\[
P(\bfeta\le f)=\exp\left(-\int_{-\infty}^\infty \sup_{t\in[0,1]}\left( \abs{f(t)}\psi(s-\beta t)\right)\,ds\right).
\]
\end{lemma}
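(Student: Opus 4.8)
The plan is to derive the functional df from the finite-dimensional distributions in equation \eqref{eqn:finite_distribution_of_EVP} by a limiting argument, passing from the maximum over a finite grid of points $t_1,\dots,t_d$ to the supremum over the whole interval $[0,1]$. First I would note that the claimed formula, after substituting $s\mapsto s$ and using the symmetry $\psi(s)=\psi(-s)$, is consistent with the fidi formula: writing $x_j=f(t_j)$ in \eqref{eqn:finite_distribution_of_EVP} gives
\[
P(\eta_{t_j}\le f(t_j),\,1\le j\le d)=\exp\left(-\int_{-\infty}^\infty\max_{j\le d}\bigl(\abs{f(t_j)}\,\psi_\beta(s-t_j)\bigr)\,ds\right),
\]
so after the change of variables $s\mapsto \beta s$ (which turns $\psi_\beta(s-t_j)=\beta\psi(\beta s-\beta t_j)$ into $\psi(s-\beta t_j)$ under $ds\mapsto\beta^{-1}\,ds$) the exponent becomes $-\int_{-\infty}^\infty\max_{j\le d}\bigl(\abs{f(t_j)}\psi(s-\beta t_j)\bigr)\,ds$. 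This already matches the target integrand with $\sup_{t\in[0,1]}$ replaced by $\max_{j\le d}$ over the grid, so the content of the lemma is precisely the interchange of this maximum with the supremum in the limit.

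Next I would take a sequence of increasingly fine partitions $0=t_1^{(d)}<\dots<t_d^{(d)}=1$ of $[0,1]$ with mesh tending to $0$, and use the continuity of the sample paths of $\bfeta$ together with the continuity of $f$ (on $[0,1]$ minus its finitely many discontinuities) to argue that the finite-dimensional events converge to the event $\{\bfeta\le f\}$. Concretely, $\{\eta_{t_j}\le f(t_j),\,1\le j\le d\}$ decreases (along refinements) to $\{\eta_t\le f(t),\,t\in[0,1]\}=\{\bfeta\le f\}$ whenever $f$ is continuous, so by continuity of the probability measure along the decreasing sequence of events, $P(\eta_{t_j}\le f(t_j),\,1\le j\le d)\to P(\bfeta\le f)$. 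For general $f\in\barE$ one approximates $f$ by continuous functions and invokes the finite discontinuity structure; this is where the definition of $\barE$ (bounded, nonpositive, finitely many discontinuities) is used.

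On the analytic side, the integrand $g_d(s):=\max_{j\le d}\bigl(\abs{f(t_j)}\psi(s-\beta t_j)\bigr)$ converges pointwise to $g(s):=\sup_{t\in[0,1]}\bigl(\abs{f(t)}\psi(s-\beta t)\bigr)$ as the mesh shrinks, by continuity of $\psi$ and of $f$ off its discontinuity set, and $g_d\le g$ is dominated by the integrable majorant $\norm f_\infty\sup_{t\in[0,1]}\psi(s-\beta t)$, which has finite integral since $\psi$ is a probability density and $t$ ranges over a compact set. Hence by dominated convergence $\int g_d\,ds\to\int g\,ds$, and combining with the probabilistic convergence above yields the claimed identity.

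The main obstacle I anticipate is the joint control of the two limits and the handling of the discontinuities of $f$: one must ensure that the grid points can be chosen so that the finite-dimensional events genuinely decrease to $\{\bfeta\le f\}$ (which is automatic for continuous $f$ but needs care near jumps of $f$), and simultaneously that the pointwise convergence $g_d\to g$ together with the uniform domination holds. The unimodality and symmetry of $\psi$, i.e. that $\psi(s)$ is decreasing for $s\ge 0$, should help to localize the supremum in $t$ and to build an explicit integrable envelope, so I would lean on those structural hypotheses to make the dominated-convergence step clean.
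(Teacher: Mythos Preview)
Your approach is correct and essentially identical to the paper's: pass from the fidis to the functional df via continuity from above of the probability measure, and interchange limit and integral by dominated convergence with the envelope $\norm f_\infty\sup_{t\in[0,1]}\psi(s-\beta t)$, whose integral is finite. The one refinement the paper makes over your sketch concerns exactly the obstacle you flag: rather than approximating $f$ by continuous functions, the paper simply takes a countable dense subset $Q=\{q_1,q_2,\dots\}\subset[0,1]$ that \emph{contains the finitely many discontinuities of $f$}; then, since $\bfeta\in C[0,1]$, the identity $\{\bfeta\le f\}=\bigcap_{n}\{\eta_{q_i}\le f(q_i),\,1\le i\le n\}$ holds exactly, and both the probabilistic and the analytic limits go through without any separate treatment of jumps.
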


\begin{proof}
The assertion follows from the fact that a probability measure is continuous from above
together with the dominated convergence theorem; note that $\int_{-\infty}^\infty
\sup_{t\in[0,1]}\psi(s-\beta t)\,ds<\infty$. Let $Q=\set{q_1,q_2,\dots}$ be a denumerable
and dense subset of $[0,1]$, which contains also the set of discontinuities of $f$. Recall
that $\bfeta\in C[0,1]$. From representation \eqref{eqn:finite_distribution_of_EVP} we
obtain
\begin{align*}
P(\bfeta\le f)&=P\left(\bigcap_{n\in\N} \set{\eta_{q_i}\le f(q_i),\,1\le i\le n}\right)\\
&=\lim_{n\to\infty} P(\eta_{q_i}\le f(q_i),\,1\le i\le n)\\
&=\lim_{n\to\infty} \exp\left(-\int_{-\infty}^\infty\max_{1\le i\le
n}\left(\abs{f(q_i)}\psi_\beta(s-q_i)\right)\,ds\right)\\
&=\exp\left(-\int_{-\infty}^\infty\lim_{n\to\infty}\left( \max_{1\le i\le
n}\left(\abs{f(q_i)}\psi_\beta(s-q_i)\right)\right)\,ds\right)\\
&=\exp\left(-\int_{-\infty}^\infty \sup_{t\in[0,1]}\left( \abs{f(t)}\psi(s-\beta t)\right)\,ds\right).
\end{align*}
\end{proof}

The preceding result provides the functional df $P(\bfV\le f)=1+\log(G(f))$  of the GPP
$\bfV$ in its upper tail.

\begin{cor} \label{cor:expansion_of_df_of_GPP}
There exists $x_0>0$ such that for the GPP $\bfV$  corresponding to  the EVP $\bfeta$ and
for any $f\in\barE$ with $\norm f_\infty\le x_0$
\begin{itemize}
\item[(i)]
\[
P(\bfV\le f)= 1-\int_{-\infty}^\infty \sup_{t\in[0,1]}\left( \abs{f(t)}\psi(s-\beta t)\right)\, ds,
\]
\item[(ii)]
\[
P(\bfV> f)= \int_{-\infty}^\infty \inf_{t\in[0,1]}\left( \abs{f(t)}\psi(s-\beta t)\right)\, ds.
\]
\end{itemize}
\end{cor}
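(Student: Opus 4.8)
The plan is to treat the two parts separately: part~(i) should fall out directly from Lemma~\ref{lem:representation_of_df_of_EVP}, whereas part~(ii) calls for a finite-dimensional inclusion--exclusion computation. For (i) I would keep the very $x_0>0$ furnished by the definition of the GPP $\bfV$ and simply substitute the lemma's formula. Writing $I(f):=\int_{-\infty}^\infty\sup_{t\in[0,1]}(\abs{f(t)}\psi(s-\beta t))\,ds$, the lemma says $G(f)=\exp(-I(f))$, hence $\log(G(f))=-I(f)$, and plugging this into $P(\bfV\le f)=1+\log(G(f))$ gives $P(\bfV\le f)=1-I(f)$, which is precisely~(i) for every $f\in\barE$ with $\norm f_\infty\le x_0$.

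For (ii) I would route the computation through the finite-dimensional survival functions. Fix a denumerable dense set $Q=\set{q_1,q_2,\dots}\subseteq[0,1]$ containing the discontinuities of $f$, put $a_i(s):=\abs{f(q_i)}\psi_\beta(s-q_i)$ and $A_i:=\set{V_{q_i}\le f(q_i)}$, and apply ordinary inclusion--exclusion to the complementary events to obtain
\[
P(V_{q_i}>f(q_i),\,1\le i\le n)=1-\sum_{\emptyset\ne S\subseteq\set{1,\dots,n}}(-1)^{\abs S-1}\,P\Bigl(\bigcap_{i\in S}A_i\Bigr).
\]
For each $S$ the finite-dimensional df of $\bfV$ obeys the counterpart of (i), namely $P(\bigcap_{i\in S}A_i)=1-\int_{-\infty}^\infty\max_{i\in S}a_i(s)\,ds$; this follows from \eqref{eqn:finite_distribution_of_EVP} and the relation $P(\bfV\le\,\cdot\,)=1+\log(G(\,\cdot\,))$, available because $\norm f_\infty\le x_0$ forces $\abs{f(q_i)}\le x_0$. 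Since $\sum_{\emptyset\ne S}(-1)^{\abs S-1}=1$ the constant terms cancel, leaving $\int_{-\infty}^\infty\sum_{\emptyset\ne S}(-1)^{\abs S-1}\max_{i\in S}a_i(s)\,ds$. The combinatorial crux is then the max--min inclusion--exclusion identity
\[
\min_{1\le i\le n}b_i=\sum_{\emptyset\ne S\subseteq\set{1,\dots,n}}(-1)^{\abs S-1}\max_{i\in S}b_i ,
\]
valid for all reals $b_i$ (cleanest to verify by integrating the corresponding level-set identity in $u\ge0$); applied with $b_i=a_i(s)$ and followed by the substitution $u=\beta s$ it gives $P(V_{q_i}>f(q_i),\,1\le i\le n)=\int_{-\infty}^\infty\min_{i\le n}(\abs{f(q_i)}\psi(u-\beta q_i))\,du$.

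It remains to let $n\to\infty$. The events $\set{V_{q_i}>f(q_i),\,1\le i\le n}$ decrease, so continuity of $P$ from above produces $P(V_q>f(q)\text{ for all }q\in Q)$ on the left; on the right $\min_{i\le n}(\abs{f(q_i)}\psi(u-\beta q_i))$ decreases to $\inf_{q\in Q}(\abs{f(q)}\psi(u-\beta q))$, and dominated convergence (same integrable majorant as in Lemma~\ref{lem:representation_of_df_of_EVP}) carries the limit inside the integral, while continuity of $\psi$ and of $f$ off the discontinuities gathered into $Q$ upgrades $\inf_{q\in Q}$ to $\inf_{t\in[0,1]}$. The step I expect to be the main obstacle is the final identification of $P(V_q>f(q)\text{ for all }q\in Q)$ with the path probability $P(\bfV>f)$: because the inequality is strict one only has the sandwich $\set{\bfV>f}\subseteq\set{V_q>f(q)\ \forall q\in Q}\subseteq\set{\bfV\ge f}$, and path-continuity together with atomlessness of the finite-dimensional margins matches the computed limit with $P(\bfV\ge f)$ rather than literally with $P(\bfV>f)$. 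Completing the proof therefore reduces to showing that the touching set $\set{\bfV\ge f}\setminus\set{\bfV>f}$ is $P$-null, which I would establish from the atomlessness of the margins and continuity of $P$; this measure-theoretic matching of the strict path event to its finite-dimensional skeleton, not the underlying algebra, is where the real effort lies.
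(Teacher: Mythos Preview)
Your route is the paper's: part~(i) straight from Lemma~\ref{lem:representation_of_df_of_EVP}, part~(ii) by inclusion--exclusion as in \citet[Lemma~3.1]{falk11}, which is exactly the reference the paper gives in its two-line proof. You have supplied the details the paper omits and have correctly isolated the one genuine subtlety that the finite-dimensional reference does not cover, namely matching the countable-index event $\{V_q>f(q),\,q\in Q\}$ with the strict path event $\{\bfV>f\}$.

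Your suggested resolution of that step, however, does not close the gap as written. Atomlessness of each marginal gives $P(V_t=f(t))=0$ for every fixed $t$, but the touching set $\{\bfV\ge f\}\setminus\{\bfV>f\}\subseteq\{\exists\,t\in[0,1]:V_t=f(t)\}$ is an uncountable union of null sets, and ``continuity of $P$'' in the measure-theoretic sense does not by itself collapse it. A clean fix exploits the homogeneity already present in your finite-dimensional formula: for $0<\varepsilon<1$ and $f<0$ one has
\[
\bigl\{V_q>(1-\varepsilon)f(q),\ q\in Q\bigr\}\ \subseteq\ \{\bfV>f\},
\]
because passing to the limit along $q_n\to t$ now yields $V_t\ge(1-\varepsilon)f(t)>f(t)$ at continuity points of $f$; your computation applied to $(1-\varepsilon)f$ then gives the lower bound $(1-\varepsilon)\int_{-\infty}^\infty\inf_{t\in[0,1]}\abs{f(t)}\psi(s-\beta t)\,ds$, and letting $\varepsilon\downarrow 0$ matches it with the upper bound $P(\bfV>f)\le P(V_q>f(q),\,q\in Q)$ already in hand. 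If $f(t_0)=0$ for some $t_0$, both sides vanish since $P(V_{t_0}>0)=0$ and $\inf_t\abs{f(t)}\psi(s-\beta t)\equiv 0$.
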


\begin{proof}
While part (i) is an immediate consequence of of Lemma
\ref{lem:representation_of_df_of_EVP}, part (ii) follows from the inclusion-exclusion formula
as in the proof of Lemma 3.1 in \citet{falk11}.
\end{proof}

Note that
\[
\norm f_D:= \int_{-\infty}^\infty \sup_{t\in[0,1]}\left( \abs{f(t)}\psi(s-\beta t)\right)\, ds,
\]
defines a norm on the set $E[0,1]$, called \textit{$D$-norm}. By $E[0,1]$ we denote the
set of those functions on $[0,1]$, which are bounded and have a finite number of
discontinuities. The representation of a multivariate extreme value distribution (EVD) or of a
multivariate generalized Pareto distribution (GPD) in terms of a $D$-norm is well-known, see
\citet{fahure10}. This concept was extended  to functional spaces in \citet{aulfaho11}.
The fidis of the stochastic processes $\bfeta$ or $\bfV$ are obtained by considering the
function $f(t)=\sum_{i=1}^d x_i 1_{\set{t_i}}(t)\in\barE$, $x_i\le 0$, $t_i\in[0,1]$,
$d\in\N$. This norm satisfies, for example, the general inequality
\[
\norm f_\infty\le \norm f_D\le \norm f_\infty \norm 1_D,\qquad f\in E[0,1],
\]
where $1$ denotes the constant function one and $\norm
f_\infty:=\sup_{t\in[0,1]}\abs{f(t)}$  is the usual sup-norm. This inequality implies in
particular that each $D$-norm is equivalent with the sup-norm which, in turn, implies that
the $L_p$-norm $\norm f_p=\left(\int_0^1 \abs{f(t)}^p\,dt\right)^{1/p}$, with
$p\in[1,\infty)$, is not a $D$-norm.

The following auxiliary result is a crucial tool for the derivation of estimators of $\beta$.

\begin{lemma} \label{lem:computation_of_psi_integral}
We have
\[
\int_{-\infty}^\infty \inf_{t\in[0,1]}\psi(s-\beta t)\,ds=2\left(1-\Psi\left(\frac\beta 2\right)\right)=2\Psi\left(-\frac\beta2\right),
\]
where $\Psi(x)=\int_{-\infty}^x\psi(s)\,ds$.
\end{lemma}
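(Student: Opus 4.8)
The plan is to evaluate the inner infimum explicitly and then integrate the resulting piecewise expression. First I would substitute $u=\beta t$, so that as $t$ ranges over $[0,1]$ the argument $s-\beta t$ ranges over the interval $[s-\beta,s]$; thus
\[
\inf_{t\in[0,1]}\psi(s-\beta t)=\inf_{v\in[s-\beta,s]}\psi(v).
\]
The crucial structural fact is that the hypotheses $\psi(s)=\psi(-s)$ and $\psi(s)$ decreasing for $s\ge 0$ force $\psi$ to be increasing on $(-\infty,0]$ and decreasing on $[0,\infty)$, i.e.\ $\psi$ is unimodal with its unique maximum at $0$ and $\psi(v)$ is a strictly decreasing function of $\abs v$. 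Consequently the infimum of $\psi$ over a closed interval is attained at whichever endpoint lies farther from $0$.

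Next I would compare the two endpoints $s-\beta$ and $s$. A direct squaring comparison shows $\abs{s-\beta}\ge\abs s$ precisely when $s\le\beta/2$. Using $\psi(s-\beta)=\psi(\beta-s)$, this yields the clean description
\[
\inf_{t\in[0,1]}\psi(s-\beta t)=
\begin{cases}
\psi(\beta-s), & s\le\beta/2,\\
\psi(s), & s\ge\beta/2.
\end{cases}
\]

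With this in hand I would split the integral at $s=\beta/2$. The right piece is immediately $\int_{\beta/2}^\infty\psi(s)\,ds=1-\Psi(\beta/2)$. For the left piece I would substitute $w=\beta-s$, which maps $(-\infty,\beta/2]$ onto $[\beta/2,\infty)$ and turns $\int_{-\infty}^{\beta/2}\psi(\beta-s)\,ds$ into $\int_{\beta/2}^\infty\psi(w)\,dw=1-\Psi(\beta/2)$ as well. Adding the two identical halves gives $2(1-\Psi(\beta/2))$, and the symmetry of $\psi$ (equivalently $1-\Psi(x)=\Psi(-x)$) rewrites this as $2\Psi(-\beta/2)$.

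The only genuinely delicate point is justifying that the infimum over the interval is attained at an endpoint rather than in the interior; this is exactly where the symmetry and monotonicity hypotheses on $\psi$ enter, since together they rule out any interior minimum. Everything after that reduction is routine endpoint comparison and a single change of variables.
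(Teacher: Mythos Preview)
Your proposal is correct and follows essentially the same route as the paper: both arguments use the symmetry and monotonicity of $\psi$ to reduce the infimum over $t\in[0,1]$ to the minimum of the two endpoint values $\psi(\abs{s})$ and $\psi(\abs{s-\beta})$, split the integral at $s=\beta/2$, and then finish with a single change of variables. Your write-up simply makes explicit the unimodality reasoning that the paper's proof leaves implicit.
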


\begin{proof}
We have
\begin{align*}
\int_{-\infty}^\infty \inf_{t\in[0,1]}\psi(s-\beta t)\,ds &= \int_{-\infty}^\infty \inf_{t\in[0,1]}\psi(\abs{s-\beta t})\,ds\\
&= \int_{-\infty}^\infty \min\left(\psi(\abs{s}), \psi(\abs{s-\beta})\right)\,ds\\
&=\int_{\beta/2}^\infty\psi(s)\,ds + \int_{-\infty}^{\beta/2}\psi(s-\beta)\,ds\\
&=2\Psi(-\beta/2).
\end{align*}
\end{proof}

\section{Estimation of $\beta$} \label{sec:estimation_of_beta}

A natural estimator of $\Psi(-\beta/2)$, based on independent copies
$\bfV^{(1)},\dots,\bfV^{(n)}$ of $\bfV$,  is by Corollary \ref{cor:expansion_of_df_of_GPP}
and Lemma \ref{lem:computation_of_psi_integral} given by
\[
\widehat\Psi_{c,n}:=\frac 1{2\abs c n}\sum_{i=1}^n 1_{(c,0]}(\bfV^{(i)}).
\]
Note the twofold meaning of $c$: In the denominator $2\abs c n$ this is just the absolute
value of the constant $c<0$, whereas in the term $1_{(c,0]}(\bfV^{(i)})$ we mean the
constant \textit{function} $c$, and we have $1_{(c,0]}(\bfV^{(i)})=1$ if and only if each
component satisfies $V^{(i)}_t>c$, $t\in[0,1]$. There should be no risk of confusion.

The law of large number implies
\[
\widehat\Psi_{c,n}\to_{n\to\infty} \Psi\left(-\frac \beta 2\right)\quad \mbox{a.s.}
\]
and, thus,
\[
\widehat\beta_{c,n}:=-2\Psi^{-1}\left(\widehat\Psi_{c,n}\right) \to_{n\to\infty}\beta \quad \mbox{a.s.},
\]
where $F^{-1}(q):=\inf\set{t\in\R:\,F(t)\ge q}$, $q\in(0,1)$, denotes the generalized
inverse of a df $F$.

The Moivre-Laplace theorem implies asymptotic normality of $\widehat\Psi_{c,n}$ and
$\widehat\beta_{c,n}$, i.e., the next result is a functional counterpart of Proposition 3.3 in
\citet{falk11}.

\begin{prop}
For $c<0$ close enough to 0 we have
\begin{align*}
&n^{1/2}\left(\hat\Psi_{c,n}-\Psi\left(-\frac \beta 2\right)\right)\\
&\to_D
N\left(0,\frac{\Psi\left(-\frac \beta 2\right)\left(1-2\abs{c}\Psi
\left(-\frac \beta 2\right)\right)}{2\abs{c}}\right)
\end{align*}
and
\begin{align*}
n^{1/2}\left(\hat\beta_{c,n}-\beta\right)
\to_D
N\left(0,\frac{2\Psi\left(-\frac \beta 2\right)\left(1-2\abs{c}\Psi
\left(-\frac\beta 2\right)\right)}{\abs{c}\psi^2
\left(-\frac\beta 2\right)}\right).
\end{align*}
\end{prop}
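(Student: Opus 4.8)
The plan is to recognize the sum $\sum_{i=1}^n 1_{(c,0]}(\bfV^{(i)})$ as a binomial count and then combine the de Moivre--Laplace theorem with the delta method. First I would fix the success probability. The events $\set{1_{(c,0]}(\bfV^{(i)})=1}$ are independent and, for each $i$, coincide with $\set{\bfV^{(i)}>c}$, where $c$ denotes the constant function $c<0$. Since $c\in\barE$ with $\norm c_\infty=\abs c\le x_0$ once $c$ is close enough to $0$, Corollary~\ref{cor:expansion_of_df_of_GPP}(ii) together with Lemma~\ref{lem:computation_of_psi_integral} yields
\[
p:=P\bigl(1_{(c,0]}(\bfV)=1\bigr)=P(\bfV>c)=\abs c\int_{-\infty}^\infty\inf_{t\in[0,1]}\psi(s-\beta t)\,ds=2\abs c\,\Psi\left(-\frac\beta2\right).
\]
Because $\Psi(-\beta/2)<\Psi(0)=1/2$ by the symmetry of $\psi$, we have $p\in(0,1)$ for $\abs c$ small, so $S_n:=\sum_{i=1}^n 1_{(c,0]}(\bfV^{(i)})$ is $\mathrm{Bin}(n,p)$-distributed and nondegenerate.

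Next I would record the first two moments of the estimator. From $\widehat\Psi_{c,n}=S_n/(2\abs c\,n)$ we obtain $E(\widehat\Psi_{c,n})=p/(2\abs c)=\Psi(-\beta/2)$, and, using $\mathrm{Var}(S_n)=np(1-p)$,
\[
\mathrm{Var}\bigl(\widehat\Psi_{c,n}\bigr)=\frac{np(1-p)}{4c^2n^2}=\frac1n\,\frac{\Psi\left(-\frac\beta2\right)\left(1-2\abs c\,\Psi\left(-\frac\beta2\right)\right)}{2\abs c}.
\]
Applying the de Moivre--Laplace theorem to the centred and scaled binomial count $n^{1/2}(\widehat\Psi_{c,n}-\Psi(-\beta/2))=(2\abs c)^{-1}n^{-1/2}(S_n-np)$ then delivers the first assertion with exactly this limiting variance.

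For the second assertion I would invoke the delta method with $g(y):=-2\Psi^{-1}(y)$, so that $\widehat\beta_{c,n}=g(\widehat\Psi_{c,n})$ and $g(\Psi(-\beta/2))=\beta$. Since $\psi$ is continuous and strictly positive, $\Psi$ is continuously differentiable and strictly increasing; hence in a neighbourhood of $\Psi(-\beta/2)$ its generalized inverse coincides with the ordinary inverse and is differentiable with $(\Psi^{-1})'(y)=1/\psi(\Psi^{-1}(y))$. Evaluating at $y=\Psi(-\beta/2)$ gives $g'(\Psi(-\beta/2))=-2/\psi(-\beta/2)$, and propagating the variance of the first part through the factor $\bigl(g'\bigr)^2=4/\psi^2(-\beta/2)$ produces the claimed limiting variance
\[
\frac{2\Psi\left(-\frac\beta2\right)\left(1-2\abs c\,\Psi\left(-\frac\beta2\right)\right)}{\abs c\,\psi^2\left(-\frac\beta2\right)}.
\]

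I expect the only genuinely delicate point to be the justification of the differentiability of $\Psi^{-1}$ at the target value $\Psi(-\beta/2)$, which legitimizes the delta method and which rests on $\psi(-\beta/2)=\psi(\beta/2)>0$; everything else reduces to the standard asymptotics of a binomial proportion, with the requirement ``$c$ close enough to $0$'' entering precisely to guarantee both $\abs c\le x_0$ (so that Corollary~\ref{cor:expansion_of_df_of_GPP} applies) and $p\in(0,1)$.
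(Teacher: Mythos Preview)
Your proposal is correct and follows exactly the approach the paper indicates: identify $S_n$ as binomial with success probability $p=2\abs c\,\Psi(-\beta/2)$ via Corollary~\ref{cor:expansion_of_df_of_GPP}(ii) and Lemma~\ref{lem:computation_of_psi_integral}, apply the de Moivre--Laplace theorem, and then the delta method with $g(y)=-2\Psi^{-1}(y)$. The paper itself does not spell out the proof beyond attributing it to the Moivre--Laplace theorem, so your write-up simply fills in the routine details.
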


We now consider a stochastic process $\bfX\in \bar C^{-}[0,1]:=\{f\in C[0,1]:f\le 0\}$,
whose upper tail is in a $\delta$-neighborhood of that of a GPP $\bfV\in C^{-}[0,1]$ with
$D$-norm $\norm f_D=\int_{-\infty}^\infty \sup_{t\in[0,1]}(\abs{f(t)}\psi(s-\beta t))\,ds$.
Precisely, we require that
\begin{equation} \label{cond:delta_neighborhood_of_GPP}
P(\bfX>cf)=P(\bfV> cf)\left(1+c^\delta K(f)+r(c,f)\right) \tag{C}
\end{equation}
for $c\in(0,1)$ and $f\in\barE$ with $\norm f_\infty\le\varepsilon_0$ for some
$\varepsilon_0>0$,  where $K:\barE\to\R$ is a function and the remainder $r(c,f)$ is of
order $o\left(c^\delta\right)$ as $c\to 0$. The next result is an immediate consequence of
Corollary \ref{cor:expansion_of_df_of_GPP}.

\begin{lemma} \label{lem:survival_probability_of_X}
Suppose that the stochastic process $\bfX\in \bar C^{-}[0,1]$
satisfies condition \eqref{cond:delta_neighborhood_of_GPP}.
Then we obtain for $c\in(0,1)$ and $f\in\barE$ with $\norm
f_\infty\le\varepsilon_0$
\begin{align*}
P(\bfX>cf)&= c\left(\int_{-\infty}^\infty \inf_{t\in[0,1]}(\abs{f(t)}\psi(s-\beta t))\,ds\right) \left(1+c^\delta K(f)+r(c,f)\right).
\end{align*}
\end{lemma}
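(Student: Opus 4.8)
The plan is to directly combine condition \eqref{cond:delta_neighborhood_of_GPP} with the exact formula for the survival probability of the GPP supplied by Corollary~\ref{cor:expansion_of_df_of_GPP}(ii). The statement of Lemma~\ref{lem:survival_probability_of_X} is essentially a substitution result: the factor $P(\bfX>cf)$ is written as $P(\bfV>cf)$ times the perturbation $(1+c^\delta K(f)+r(c,f))$, so the only genuine content is to evaluate $P(\bfV>cf)$ and extract the scaling in $c$.

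First I would observe that for $f\in\barE$ with $\norm{f}_\infty\le\eps_0$, and for $c\in(0,1)$, the product $cf$ again lies in $\barE$ and satisfies $\norm{cf}_\infty = c\norm{f}_\infty \le \eps_0$; choosing $\eps_0\le x_0$ (the threshold from Corollary~\ref{cor:expansion_of_df_of_GPP}) ensures the corollary applies to the argument $cf$. Then I would apply Corollary~\ref{cor:expansion_of_df_of_GPP}(ii) with $cf$ in place of $f$:
\[
P(\bfV>cf)=\int_{-\infty}^\infty \inf_{t\in[0,1]}\left(\abs{(cf)(t)}\psi(s-\beta t)\right)\,ds.
\]
The key algebraic step is to pull the factor $c$ out of the infimum. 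Since $c>0$, we have $\abs{cf(t)}=c\abs{f(t)}$, and because $c$ is a positive constant independent of $t$, it factors through both the infimum and the integral:
\[
P(\bfV>cf)=c\int_{-\infty}^\infty \inf_{t\in[0,1]}\left(\abs{f(t)}\psi(s-\beta t)\right)\,ds.
\]

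Finally I would substitute this expression into condition \eqref{cond:delta_neighborhood_of_GPP}, yielding exactly the claimed identity
\[
P(\bfX>cf)=c\left(\int_{-\infty}^\infty \inf_{t\in[0,1]}\left(\abs{f(t)}\psi(s-\beta t)\right)\,ds\right)\left(1+c^\delta K(f)+r(c,f)\right).
\]
I do not anticipate a serious obstacle here, as the result is an immediate consequence of the corollary combined with the defining condition, exactly as the excerpt remarks. The one point requiring a modicum of care is the compatibility of the domains of validity: condition \eqref{cond:delta_neighborhood_of_GPP} is stated for $\norm{f}_\infty\le\eps_0$ while Corollary~\ref{cor:expansion_of_df_of_GPP} requires its argument to have sup-norm at most $x_0$, so one must verify that the scaled function $cf$ meets the corollary's hypothesis. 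As noted above, this holds automatically for $c\in(0,1)$ once $\eps_0\le x_0$, and the homogeneity of the $D$-norm expression in $c$ is what makes the clean factorization possible.
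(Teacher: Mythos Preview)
Your proposal is correct and matches the paper's own justification, which simply states that the lemma is an immediate consequence of Corollary~\ref{cor:expansion_of_df_of_GPP}. Your argument spells out precisely this substitution, including the homogeneity step that extracts the factor $c$ from the integral.
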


In what follows we show how a process $\bfX$ satisfying condition
\eqref{cond:delta_neighborhood_of_GPP} can be generated. From \citet{aulfaho11} we
conclude that there is a stochastic process $\bfZ=(Z_t)_{t\in[0,1]}$ on $[0,1]$ with
continuous sample paths and $0\le Z_t\le m$, $E(Z_t)=1$, $t\in[0,1]$, for some constant
$m\ge 1$, such that
\begin{align*} \label{eqn:representation_of_D-norm_via_generator_Z}
\norm f_D&=\int_{-\infty}^\infty \sup_{t\in[0,1]}(\abs{f(t)}\psi(s-\beta t))\,ds\nonumber\\
&=E\left(\sup_{t\in[0,1]}(\abs{f(t)}Z_t)\right),\qquad f\in E[0,1].
\end{align*}
The stochastic process $\bfZ$ is called \textit{generator} of the $D$-norm. Conversely,
each process $\bfZ$ with the above properties generates a $D$-norm via $\norm f_D:=
E\left(\sup_{t\in[0,1]}(\abs{f(t)}Z_t)\right)$, $f\in E[0,1]$. For every $D$-norm $\norm
\cdot_D$ there exists a standard EVP $\bfeta\in C[0,1]$ with functional df $P(\bfeta\le
f)=\exp(-\norm f_D)$, $f\in \barE$. While a generator $\bfZ$ is in general not uniquely
determined, the \textit{generator constant} $E\left(\sup_{t\in[0,1]}(Z_t)\right)$ $=\norm
1_D$ is. We refer to \citet{aulfaho11} for details.

Put
\begin{equation} \label{eqn:generation_of_gpp}
  \bfV:=(V_t)_{t\in[0,1]}:=\left(\max\left(-\frac U{Z_t},M\right)\right)_{t\in[0,1]},
\end{equation}
where $U$ and $\bfZ$ are independent, $U$ is a uniformly on $(0,1)$ distributed rv and
$M<0$ is an arbitrary constant. We incorporate the constant $M$ to ensure that
$V_t>-\infty$ for each $t\in[0,1]$, as $Z_t$ may attain the value zero. The continuous
process $\bfV$ is a GPP, as we have for $f\in\barE$ with $\norm f_\infty\le\min(|M|,1/m)$
\begin{align*}
P(\bfV\le f)&=P(U\ge \abs{f(t)} Z_t,\,0\le t\le 1)\\
&= P(U\ge \sup_{t\in[0,1]}(\abs{f(t)} Z_t))\\
&=1-E\left(\sup_{t\in[0,1]}(\abs{f(t)} Z_t)\right)\\
&=1-\norm f_D.
\end{align*}
We have, moreover,
\begin{align*}
P(\bfV> f)&=P(U< \abs{f(t)} Z_t,\,0\le t\le 1)\\
&= P(U\le \inf_{t\in[0,1]}(\abs{f(t)} Z_t))\\
&=E\left(\inf_{t\in[0,1]}(\abs{f(t)} Z_t)\right)\\
&=\int_{-\infty}^\infty \inf_{t\in[0,1]}(\abs{f(t)}\psi(s-\beta t))\,ds,
\end{align*}
where the final equality is a consequence of Corollary \ref{cor:expansion_of_df_of_GPP}, part (ii).

Replace now the rv $U$ in \eqref{eqn:generation_of_gpp}  by a rv $Y>0$, which is also
independent of $\bfZ$ and whose df $H$ is continuous and satisfies
\begin{equation} \label{eqn:distribution_function_of_Y_in_gpp}
H(u)=u+ A u^{1+\delta}+o(u^{1+\delta})\qquad\mbox{as }u\downarrow 0
\end{equation}
with some constant $A\in\R$. The standard exponential distribution, for instance, satisfies
this condition with $\delta=1$ and $A=-1/2$. The process
\begin{equation} \label{eqn:generation_of_process_in_neighborhood_of_gpp}
\bfX:=(X_t)_{t\in[0,1]}:=\left(\max\left(-\frac Y {Z_t},M\right)\right)_{t\in[0,1]}
\end{equation}
then satisfies condition \eqref{cond:delta_neighborhood_of_GPP} with
\[
K(f)=A \frac{E\left(\inf_{t\in[0,1]}(\abs{f(t)}Z_t)^{1+\delta}\right)} {E\left(\inf_{t\in[0,1]}(\abs{f(t)}Z_t)\right)},
\]
which has to be interpreted as zero if the denominator vanishes.

The following theorem is the main result of this section. We will see in
Section~\ref{sec:lan_of_exceedances} using LAN theory that it implies that $\hat
\Psi_{c_n,n}$ is an asymptotically efficient estimator sequence in an appropriate model.

\begin{theorem} \label{thm:asymptotic_normality_of_estimators_in_neighborhood_of_GPP}
Suppose that the stochastic process $\bfX\in \bar C^{-}[0,1]$ satisfies condition
\eqref{cond:delta_neighborhood_of_GPP}. If the sequence of thresholds $c_n<0$, $n\in\N$,
satisfies $c_n\to 0$, $n\abs{c_n}\to\infty$,
$n\abs{c_n}^{1+2\delta}\to\mathrm{const}\ge 0$ as $n\to\infty$, then we obtain
\begin{enumerate}
\item
\begin{equation*}
 (n\abs{c_n})^{1/2}\left(\hat\Psi_{c_n,n}-\Psi\left(-\frac\beta2
\right)\right)
 \to_D
N\left(\mathrm{const}^{1/2}\mu,\frac12\Psi\left(-\frac\beta 2
\right)\right),
\end{equation*}
\item
\begin{equation*}
 (n\abs{c_n})^{1/2}\left(\hat\beta_{c_n,n}-\beta\right)
 \to_D N\left(-\frac{2\,\mathrm{const}^{1/2}\mu}{
\psi\left(-\frac\beta2\right)},\frac{2\Psi\left(-
\frac\beta2\right)}{
\psi^2\left(-\frac\beta2\right)}\right),
\end{equation*}
\end{enumerate}
 where $\mu:=K(-1) \Psi(-\beta/2)$.
\end{theorem}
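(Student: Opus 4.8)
The plan is to reduce the statement to a binomial count and then invoke a triangular-array central limit theorem together with the delta method. First I would note that, since $\bfX^{(i)}\le 0$, we have $1_{(c_n,0]}(\bfX^{(i)})=1$ exactly when $\bfX^{(i)}>c_n$ (viewing $c_n<0$ as the constant function), so that $S_n:=\sum_{i=1}^n 1_{(c_n,0]}(\bfX^{(i)})$ is binomially distributed with parameters $n$ and $p_n:=P(\bfX>c_n)$. Writing the constant threshold as $c_n=\abs{c_n}\cdot(-1)$ and applying Lemma~\ref{lem:survival_probability_of_X} with $f=-1$, together with Lemma~\ref{lem:computation_of_psi_integral}, yields the expansion
\[
p_n=2\abs{c_n}\,\Psi\!\left(-\tfrac\beta2\right)\bigl(1+\abs{c_n}^\delta K(-1)+r(c_n,-1)\bigr),\qquad r(c_n,-1)=o(\abs{c_n}^\delta).
\]

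From this I would read off the mean and variance of $\widehat\Psi_{c_n,n}$. Since $E(\widehat\Psi_{c_n,n})=p_n/(2\abs{c_n})$, the expansion gives the bias $E(\widehat\Psi_{c_n,n})-\Psi(-\beta/2)=\mu\,\abs{c_n}^\delta+o(\abs{c_n}^\delta)$ with $\mu=K(-1)\Psi(-\beta/2)$, so that after the scaling $(n\abs{c_n})^{1/2}$ the bias becomes $\mu\,(n\abs{c_n}^{1+2\delta})^{1/2}+o(1)\to\mathrm{const}^{1/2}\mu$; this is precisely where the hypothesis $n\abs{c_n}^{1+2\delta}\to\mathrm{const}$ enters. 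Likewise $(n\abs{c_n})\,\mathrm{Var}(\widehat\Psi_{c_n,n})=p_n(1-p_n)/(4\abs{c_n})\to\tfrac12\Psi(-\beta/2)$, because $p_n\to0$ and $p_n/\abs{c_n}\to2\Psi(-\beta/2)$.

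For the limit law in part (1) I would center at the mean and apply a central limit theorem to the triangular array $\xi_{n,i}:=1_{(c_n,0]}(\bfX^{(i)})-p_n$. Because $np_n\sim 2n\abs{c_n}\Psi(-\beta/2)\to\infty$, the total variance $s_n^2=np_n(1-p_n)\to\infty$; since $\abs{\xi_{n,i}}\le1$, the Lindeberg condition holds trivially as soon as $s_n>1/\varepsilon$, giving $(S_n-np_n)/s_n\to_D N(0,1)$. Rescaling by the factor $s_n/(2(n\abs{c_n})^{1/2})\to(\tfrac12\Psi(-\beta/2))^{1/2}$ and adding back the deterministic bias limit via Slutsky produces the stated normal law with mean $\mathrm{const}^{1/2}\mu$ and variance $\tfrac12\Psi(-\beta/2)$.

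Part (2) then follows from the delta method applied to $g(x)=-2\Psi^{-1}(x)$; since $\psi(-\beta/2)>0$ the generalized inverse is genuinely differentiable at $\Psi(-\beta/2)$, with $g(\Psi(-\beta/2))=\beta$ and $g'(\Psi(-\beta/2))=-2/\psi(-\beta/2)$, so the asymptotic mean and variance transform by the factors $g'$ and $(g')^2$, yielding $-2\,\mathrm{const}^{1/2}\mu/\psi(-\beta/2)$ and $2\Psi(-\beta/2)/\psi^2(-\beta/2)$. The only genuinely delicate point is the simultaneous bookkeeping of the two rates: the bias is of exact order $\abs{c_n}^\delta$ while the fluctuations are of order $(n\abs{c_n})^{-1/2}$, and the balance condition $n\abs{c_n}^{1+2\delta}\to\mathrm{const}$ is exactly what makes both survive at the common scale $(n\abs{c_n})^{1/2}$; I would take care to verify that the remainder $r(c_n,-1)=o(\abs{c_n}^\delta)$ contributes nothing after this rescaling.
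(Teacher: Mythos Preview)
Your proposal is correct and follows essentially the same route as the paper: decompose $(n\abs{c_n})^{1/2}\bigl(\widehat\Psi_{c_n,n}-\Psi(-\beta/2)\bigr)$ into a centered binomial fluctuation plus a deterministic bias, use Lemma~\ref{lem:survival_probability_of_X} and Lemma~\ref{lem:computation_of_psi_integral} to evaluate the bias as $\mathrm{const}^{1/2}\mu$, apply the CLT for the binomial part, and finish with the delta method for $\widehat\beta_{c_n,n}$. The only cosmetic differences are that the paper invokes the Moivre--Laplace theorem where you cite Lindeberg, and it writes out the mean-value expansion of $\Psi^{-1}$ explicitly (with an intermediate point and Slutsky) where you say ``delta method''; substantively these are identical.
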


\begin{proof}
From Lemma \ref{lem:survival_probability_of_X} we obtain
\begin{align} \label{eqn:expansion_of_survival_probability_of_X}
&\frac 1{{\abs c}^\delta}\left(\frac {P(\bfX>c)}{\abs c}- \int_{-\infty}^\infty \inf_{t\in[0,1]}\psi(s-\beta t)\,ds\right)\nonumber\\
&\to_{c\uparrow 0} K(-1) \int_{-\infty}^\infty \inf_{t\in[0,1]}\psi(s-\beta t)\,ds\nonumber\\
&= 2K(-1)\Psi\left(-\frac\beta2\right).
\end{align}
Write
\begin{align*}
&(n\abs{c_n})^{1/2} \left(\widehat\Psi_{c_n,n} - \Psi\left(-\frac\beta 2\right)\right)\\
&= (n\abs{c_n})^{1/2} \left(\frac 1{2n\abs{c_n}}\sum_{j=1}^n \left(1_{(c_n,0]}(\bfX_j)-P(\bfX>c_n)\right)\right)\\
&\hspace*{2cm}+ (n\abs{c_n})^{1/2} \left(\frac{P(\bfX>c_n)}{2\abs{c_n}} - \Psi\left(-\frac\beta 2\right)\right)\\
&=:\eta_n+b_n.
\end{align*}
The Moivre-Laplace theorem implies
\[
\eta_n\to_DN\left(0,\frac 12 \Psi\left(-\frac\beta 2\right)\right),
\]
and expansion \eqref{eqn:expansion_of_survival_probability_of_X} yields
\begin{align*}
b_n&=\frac{\left(n\abs{c_n}^{1+2\delta}\right)^{1/2}} 2 \frac 1{\abs{c_n}^\delta} \left(\frac{P(\bfX>c_n)} {\abs{c_n}} - \int_{-\infty}^\infty \inf_{t\in[0,1]} \psi(s-\beta t)\,ds\right)\\
&\to_{n\to\infty} \mathrm{const}^{1/2}\mu.
\end{align*}
Equally, one concludes
\begin{align*}
&(n\abs{c_n})^{1/2}\left(\hat\beta_{c_n,n}-\beta\right) \\*
&=2 (n\abs{c_n})^{1/2}\left(\Psi^{-1}\left(\Psi\left(-\frac \beta 2\right)\right)- \Psi^{-1}\left(\widehat\Psi_{c_n,n}\right)\right)\\
&= 2 (n\abs{c_n})^{1/2} \left(\Psi^{-1}\right)'(\xi)\left(\Psi\left(-\frac \beta 2\right)- \widehat\Psi_{c_n,n}\right)\\
&\to_D N\left(-\frac {2\mathrm{const}^{1/2}\mu}{\psi\left(-\frac \beta 2\right)}, \frac{2\Psi\left(-\frac \beta 2\right)}{\psi^2\left(-\frac \beta 2\right)}\right)
\end{align*}
by Slutsky's lemma, with $\xi$ between $\widehat\Psi_{c_n,n}$
and $\Psi(-\beta/2)$. This completes the proof.
\end{proof}

The idea suggests itself to substitute the constant threshold $c$ by a suitable threshold
function $f\in \barE$ and to consider, with $c<0$,
\begin{align*}
\widehat\Psi_{f,c,n}&:=\frac 1{2\abs{c}n}\sum_{i=1}^n 1(\bfV^{(i)}>\abs c f)\\
&\to_{n\to \infty}\frac 1{2\abs c} P(\bfV>\abs c f)\\
&=\frac{1}{2}\int_{-\infty}^\infty\inf_{t\in[0,1]} \left(\abs{f(t)}\psi(s-\beta t)\right)\,ds
\end{align*}
almost surely by the law of large numbers and Corollary \ref{cor:expansion_of_df_of_GPP}.

The fact that with constant function $f=-1$, the above integral equals by
Lemma~\ref{lem:computation_of_psi_integral}
\[
\int_{-\infty}^\infty\inf_{t\in[0,1]} \psi(s-\beta t)\,ds=2\Psi\left(-\frac \beta 2\right)
\]
was the crucial observation for the derivation of an estimator of $\beta$. Substituting the
constant function $f=-1$ by an arbitrary function $f\in\barE$ can, however, lead to
surprising consequences, as the following example shows.

\begin{exam}\upshape
Take $\psi(s)=2^{-1}\exp(-\abs s)$, $s\in\R$, and $f(t):=-\exp(-t)$, $t\in[0,1]$. Then we
have for any $\beta\in[0,1]$
\[
T(f,\beta,\psi):= \int_{-\infty}^\infty\inf_{t\in[0,1]} \left(\abs{f(t)}\psi(s-\beta t)\right)\,ds=\exp(-1),
\]
i.e., the functional $T(f,\beta,\psi)$ is not capable to discriminate between different values
of $\beta\in[0,1]$. For $\beta> 1$ one obtains, however,
\begin{equation*}
  T(f, \beta, \psi) = \exp\left(-\frac{1+\beta}{2}\right).
\end{equation*}
\end{exam}

The question, whether for each underlying density $\psi$ there exists an \textit{optimal}
threshold function $f=f_\psi$, is an open problem.

\section{LAN of Exceedances} \label{sec:lan_of_exceedances}

Let $\bfX^{(i)}$, $1\le i\le n$, be independent copies of a stochastic process $\bfX\in
\barC$,  which satisfies condition \eqref{cond:delta_neighborhood_of_GPP}. Choose $c<0$.
In this section we establish local asymptotic normality (LAN) of the point process of
exceedances
\[
N_{n,c}(B):=\sum_{i=1}^n\varepsilon_{\sup_{t\in[0,1]}\left(X^{(i)}_t/c\right)}(B\cap [0,1)),\qquad B\in\B,
\]
where $\B$ denotes the $\sigma$-field of Borel sets in $\R$. Note that for $s\in(0,1]$
\[
\sup_{t\in[0,1]} \frac {X_t}c<s\iff \bfX>sc,
\]
i.e., the random point measure $N_{n,c}$ actually represents those processes  among
$\bfX^{(1)},\dots,\bfX^{(n)}$ which are exceedances above the constant function $c$.

It is by Theorem \ref{thm:asymptotic_normality_of_estimators_in_neighborhood_of_GPP},
part (i),  quite convenient to substitute the parameter $\beta>0$ in the family
$\psi_\beta(\cdot)=\beta\psi(\beta\cdot)$ by the parameter
\[
\vartheta:=2\Psi\left(-\frac \beta 2\right)\in(0,1).
\]
Fix $\vartheta_0\in(0,1)$. We require that the family of univariate df
$F_{\vartheta,c}(s):=P_\vartheta(\bfX>s c)$, $\vartheta\in (0,1)$, $s>0$, satisfies for
$s\in (0,1)$, $c_0\le c< 0$ for some $c_0<0$, and $\vartheta$ close to $\vartheta_0$ the
expansion
\begin{equation} \label{cond:differentiable_spectral_neighborhood}
\frac{f_{\vartheta,c}(s)} {f_{\vartheta_0,c}(s)} := \frac{\frac d{ds}P_\vartheta(\bfX>s c)}{\frac d{ds}P_{\vartheta_0}(\bfX>s c)} = 1+L(\vartheta-\vartheta_0)+ r_{\vartheta_0}(s,\vartheta,c), \tag{D}
\end{equation}
with
\[
r_{\vartheta_0}(s,\vartheta,c) = o(\abs{\vartheta-\vartheta_0})+O\left(\abs c^\gamma\right)
\]
uniformly for $s\in(0,1)$, $c_0\le c\le 0$ and $\vartheta$ close to $\vartheta_0$, where the
constants $L\in\R$ and $\gamma>0$ may depend  on $\vartheta_0$. Note that condition
\eqref{cond:differentiable_spectral_neighborhood} implies in particular that
$P_\vartheta\left(\sup_{t\in[0,1]}\left(X_t/c\right)=s\right)=0$ and, thus,
$F_{\vartheta,c}(s)= P_\vartheta\left(\sup_{t\in[0,1]}\left(X_t/c\right)\le s\right)$, $s>0$,
is actually a df on $[0,\infty)$.

Condition \eqref{cond:differentiable_spectral_neighborhood} is, for example, satisfied with
$L=1/\vartheta_0$ and $r_{\vartheta_0}=0$ if $\bfX$ is a GPP. We can also use the
approach from definition \eqref{eqn:generation_of_process_in_neighborhood_of_gpp} to
generate a process
\begin{equation} \label{eqn:generation_of_process_in_neighborhood_of_gpp_for_lan}
\bfX:=\left(\max\left(-\frac Y {Z_t}, M\right)\right)_{t\in[0,1]}.
\end{equation}
In addition to condition \eqref{eqn:distribution_function_of_Y_in_gpp} we require that the
continuous df $H$ of the rv $Y>0$ satisfies the expansion
\[
H(u)=u+A u^{1+\delta}+r(u),\qquad 0<u<1,
\]
with some constant $A\in\R$, where the function $r$ is differentiable on $(0,1)$ with
bounded derivative and $r'(u)=o(u^\delta)$ as $u\downarrow 0$. Then condition
\eqref{cond:differentiable_spectral_neighborhood} is satisfied with $L=1/\vartheta_0$.

Denote by $Y_1,\dots,Y_{\tau(n)}$ those rv among $\sup_{t\in[0,1]}X_t^{(i)}/c$, $1\le
i\le n$, with $\sup_{t\in[0,1]}X_t^{(i)}/c<1$, in the order of their outcome. Then we have
\[
N_{n,c}(B)=\sum_{k\le \tau(n)}\varepsilon_{Y_k}(B),\qquad
B\in\mathbb{B}.
\]
From Theorem 1.4.1 in \citet{reiss93} we obtain that
$Y_1,Y_2,\dots$ are independent copies of a rv $Y$ with df
\[
P_\vartheta(Y\le
t)=\frac{P_\vartheta(\bfX>tc)}{P_\vartheta(\bfX>c)},\qquad 0\le
t\le 1,
\]
under parameter $\vartheta>0$, and they are independent of the total number $\tau(n)$,
which is binomial $B(n,P_\vartheta(\bfX>c))$-distributed.

Since the distribution $\mathcal{L}_{\vartheta,c}(Y)$ of $Y$ under $\vartheta$ is by
condition \eqref{cond:differentiable_spectral_neighborhood} dominated by
$\mathcal{L}_{\vartheta_0,c}(Y)$ for $\vartheta$ in a neighborhood of $\vartheta_0$ and
$c_0\le c<0$, the distribution $\mathcal{L}_\vartheta(N_{n,c})$ of $N_{n,c}$  is
dominated by $\mathcal{L}_{\vartheta_0}(N_{n,c})$, see, e.g. Theorem 3.1.2 in
\citet{reiss93}. Precisely, $N_{n,c}$ is a random element in the set
$\mathbb{M}:=\{\mu=\sum_{1\le j\le n}\varepsilon_{y_j}:\;y_j\ge 0,\, j\le
n,\,n=0,1,2,\dots\}$ of finite point measures on $([0,\infty),\mathbb{B}\cap[0,\infty))$,
equipped with the smallest $\sigma$-field $\mathcal{M}$ such that for any
$B\in\mathbb{B}\cap[0,\infty)$ the projection $\pi_B:\,\mathbb{M}\to\set{0,1,2,\dots}$,
$\pi_B(\mu):=\mu(B)$, is measurable; we refer to Section~1.1 in \citet{reiss93} for
technical details.

From \citet[Example 3.1.2]{reiss93} we conclude that $\mathcal{L}_\vartheta(N_{n,c})$
has the $\mathcal{L}_{\vartheta_0}(N_{n,c})$-density
\begin{align*}
&\frac{d\mathcal{L}_\vartheta(N_{n,c})}{d\mathcal{L}_{\vartheta_0}(N_{n,c})}(\mu) \\*
&=\left(\prod_{i=1}^{\mu((0,1))}\frac{f_{\vartheta,c}(y_i)}
{f_{\vartheta_0,c}(y_i)}
\frac{P_{\vartheta_0}(\bfX>c)}{P_\vartheta(\bfX>c)}\right)\\
&\hspace*{2cm}\times
\left(\frac{P_\vartheta(\bfX>c)}{P_{\vartheta_0}(\bfX>c)}
\right)^{\mu((0,1))}
\left(\frac{1-P_\vartheta(\bfX>c)}{1-P_{\vartheta_0}(\bfX>c)}\right)^{n-\mu((0,1))}
\end{align*}
if $\mu=\sum_{i=1}^{\mu((0,1))}\varepsilon_{y_i}$ and $\mu((0,1))\le
n$. The loglikelihood ratio is, consequently,
\begin{align} \label{eqn:loglikelihood_ratio_of_point_processes}
&L_{n,c}(\vartheta\mid\vartheta_0)\nonumber\\*
&=\log\left\{\frac{d\mathcal{L}_\vartheta(N_{n,c})}
{d\mathcal{L}_{\vartheta_0}(N_{n,c})}(N_{n,c}) \right\}\nonumber\\
&=\sum_{k\le\tau(n)}
\log\left(\frac{f_{\vartheta,c}(Y_k)}{f_{\vartheta_0,c}(Y_k)}
\frac{P_{\vartheta_0}(\bfX>c)}{P_\vartheta(\bfX>c)} \right)\nonumber\\
&\hspace*{1cm}
+\tau(n)\log\left(\frac{P_\vartheta(\bfX>c)}{P_{\vartheta_0}(\bfX>c)}
\right) +
(n-\tau(n))\log\left(\frac{1-P_\vartheta(\bfX>c)}{1-P_{\vartheta_0}
(\bfX>c)} \right).
\end{align}

We let in the sequel $c=c_n$ depend on the sample size $n$ with $c_n\uparrow 0$ and,
equally, $\vartheta=\vartheta_n$ with $\vartheta_n\to\vartheta_0$ as $n\to\infty$.
Precisely, we put with arbitrary $\xi\in\R$
\[
\vartheta_n:=\vartheta_n(\xi):=\vartheta_0+\frac{\xi}{(n\abs{c_n})^{1/2}}.
\]
The following theorem is the main result of this section. It is analogous to Theorem 5.1 in
\citet{falk11}, whose proof carries over.

\begin{theorem} \label{thm:LAN} Suppose that $\psi(s)=\psi(-s)$ and that
$\psi(s)$, $s\ge 0$, is decreasing. Suppose, further,  that
$n\abs{c_n}\to_{n\to\infty}\infty$ and that
\begin{equation} \label{cond:gamma_condition_on_probability_and_x_n}
n\abs{c_n}^{1+2\min(\delta,\gamma)}\to_{n\to\infty}0.
\end{equation}
Then we obtain the expansion
\begin{align*}
L_{n,c_n}(\vartheta_n\mid\vartheta_0)&=\xi L Z_n-\frac{\xi^2
L^2\vartheta_0}2+o_{P_{\vartheta_0}}(1)\\
&\to_{D_{\vartheta_0}}N
\left(-\frac{\xi^2L^2\vartheta_0}2,\xi^2L^2\vartheta_0\right)
\end{align*}
with
\begin{equation} \label{eqn:asymptotic_normality_of_Z_n_under_theta_0}
Z_n:=\frac{\tau(n)-n\abs{c_n}\vartheta_0}
{(n\abs{c_n})^{1/2}}\to_{D_{\vartheta_0}}N(0,\vartheta_0).
\end{equation}
\end{theorem}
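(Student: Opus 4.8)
The plan is to establish the stochastic expansion directly and then read off the limit law from the asserted convergence of $Z_n$. First I would simplify the loglikelihood ratio \eqref{eqn:loglikelihood_ratio_of_point_processes}: the factors $P_{\vartheta_0}(\bfX>c_n)/P_{\vartheta_n}(\bfX>c_n)$ inside the first sum cancel against the separate term $\tau(n)\log(P_{\vartheta_n}(\bfX>c_n)/P_{\vartheta_0}(\bfX>c_n))$, leaving
\[
L_{n,c_n}(\vartheta_n\mid\vartheta_0)=\sum_{k\le\tau(n)}\log\frac{f_{\vartheta_n,c_n}(Y_k)}{f_{\vartheta_0,c_n}(Y_k)}+(n-\tau(n))\log\frac{1-P_{\vartheta_n}(\bfX>c_n)}{1-P_{\vartheta_0}(\bfX>c_n)}.
\]
Abbreviate $a_n:=L(\vartheta_n-\vartheta_0)=L\xi/(n\abs{c_n})^{1/2}$, $p_n:=P_{\vartheta_0}(\bfX>c_n)$ and $\Delta_n:=P_{\vartheta_n}(\bfX>c_n)-p_n$. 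Condition \eqref{cond:differentiable_spectral_neighborhood} gives $f_{\vartheta_n,c_n}(s)/f_{\vartheta_0,c_n}(s)=1+a_n+r_{\vartheta_0}(s,\vartheta_n,c_n)$, and since $\int_0^1 f_{\vartheta_0,c_n}(s)\,ds=p_n$, integrating this identity shows $\Delta_n=p_na_n+\int_0^1 f_{\vartheta_0,c_n}(s)r_{\vartheta_0}(s,\vartheta_n,c_n)\,ds$; Lemmas \ref{lem:survival_probability_of_X} and \ref{lem:computation_of_psi_integral} give $p_n=\abs{c_n}\vartheta_0(1+\abs{c_n}^\delta K(-1)+o(\abs{c_n}^\delta))$.

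Next I would Taylor-expand both logarithms via $\log(1+x)=x-\tfrac12x^2+O(x^3)$, writing $\rho_k:=a_n+r_{\vartheta_0}(Y_k,\vartheta_n,c_n)$ and $b_n:=\Delta_n/(1-p_n)$. The crucial step is the linear part. The first sum contributes $\sum_{k\le\tau(n)}\rho_k$; conditioning on $\tau(n)$, each $Y_k$ has density $f_{\vartheta_0,c_n}/p_n$, so by the integral identity above $E[\rho_k\mid\tau(n)]=\Delta_n/p_n$, whence the conditional mean is $\tau(n)\Delta_n/p_n$, while the conditional fluctuation is $o_{P_{\vartheta_0}}(1)$ because its conditional variance is at most $\tau(n)\sup_s r_{\vartheta_0}^2=O(n\abs{c_n}^{1+2\gamma})+o(1)\to0$ by \eqref{cond:gamma_condition_on_probability_and_x_n}. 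The third term contributes $-(n-\tau(n))b_n$, and combining the two linear parts yields
\[
\frac{\tau(n)\Delta_n}{p_n}-(n-\tau(n))\frac{\Delta_n}{1-p_n}=\frac{\Delta_n}{p_n(1-p_n)}\bigl(\tau(n)-np_n\bigr),
\]
so the deterministic count $np_n$ cancels. Since $\Delta_n/p_n=a_n+o((n\abs{c_n})^{-1/2})+O(\abs{c_n}^\gamma)$ and $(\tau(n)-np_n)/(n\abs{c_n})^{1/2}=Z_n+o(1)$ — the latter because $(np_n-n\abs{c_n}\vartheta_0)/(n\abs{c_n})^{1/2}=O((n\abs{c_n}^{1+2\delta})^{1/2})\to0$ by \eqref{cond:gamma_condition_on_probability_and_x_n} — this linear part equals $\xi LZ_n+o_{P_{\vartheta_0}}(1)$. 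For the quadratic part, $-\tfrac12\sum_{k\le\tau(n)}a_n^2=-\tfrac12\tau(n)a_n^2\to-\tfrac12\xi^2L^2\vartheta_0$ in $P_{\vartheta_0}$-probability (using $\tau(n)/(n\abs{c_n})\to\vartheta_0$), whereas the mixed terms $2a_n\rho_k$, the $r_{\vartheta_0}^2$-terms, the cubic remainders, and the whole quadratic contribution $(n-\tau(n))b_n^2=O(\abs{c_n})$ of the third term are all $o_{P_{\vartheta_0}}(1)$, again by \eqref{cond:gamma_condition_on_probability_and_x_n}.

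Finally I would prove \eqref{eqn:asymptotic_normality_of_Z_n_under_theta_0}: under $\vartheta_0$ the count $\tau(n)$ is $B(n,p_n)$-distributed with $np_n\to\infty$, so the Moivre--Laplace theorem gives $(\tau(n)-np_n)/(np_n(1-p_n))^{1/2}\to_DN(0,1)$; since $p_n/\abs{c_n}\to\vartheta_0$ and the recentering from $np_n$ to $n\abs{c_n}\vartheta_0$ is negligible as above, $Z_n\to_{D_{\vartheta_0}}N(0,\vartheta_0)$. Slutsky's lemma then turns $L_{n,c_n}(\vartheta_n\mid\vartheta_0)=\xi LZ_n-\tfrac12\xi^2L^2\vartheta_0+o_{P_{\vartheta_0}}(1)$ into the claimed $N(-\tfrac12\xi^2L^2\vartheta_0,\xi^2L^2\vartheta_0)$ limit. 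I expect the main obstacle to be the uniform bookkeeping of the two remainder scales, $o(\abs{\vartheta_n-\vartheta_0})$ and $O(\abs{c_n}^\gamma)$ in \eqref{cond:differentiable_spectral_neighborhood} together with the $O(\abs{c_n}^\delta)$ error in $p_n$: one must verify that each summed $O(\abs{c_n}^\gamma)$ error, after multiplication by the $\approx n\abs{c_n}$ exceedances and by the factors $(n\abs{c_n})^{\pm1/2}$ coming from $a_n$ and from $\tau(n)-np_n$, is annihilated precisely by the single hypothesis $n\abs{c_n}^{1+2\min(\delta,\gamma)}\to0$, and that the dependence on the marks $Y_k$ washes out, so that the count $\tau(n)$ alone carries the central sequence $\xi LZ_n$.
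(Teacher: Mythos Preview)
Your argument is correct. The paper does not spell out a proof but states that the result is analogous to Theorem~5.1 in \citet{falk11}, whose proof carries over; your direct Taylor expansion of \eqref{eqn:loglikelihood_ratio_of_point_processes}, isolating the linear contribution via the identity $\tau(n)\Delta_n/p_n-(n-\tau(n))\Delta_n/(1-p_n)=\Delta_n(\tau(n)-np_n)/(p_n(1-p_n))$ and killing all remainder terms through condition~\eqref{cond:gamma_condition_on_probability_and_x_n}, is precisely the standard LAN computation that reference carries out in the multivariate setting.
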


The above result reveals that the complete information about the underlying parameter that
is contained in the exceedances $Y_1,\dots,Y_{\tau(n)}$ is, actually, contained in their
number $\tau(n)$ as $n$ increases. This is in complete accordance with the results in
\citet{falk98}, where this phenomenon was studied for general truncated empirical
processes. The result here is, however, derived under more specialized conditions.

Theorem \ref{thm:LAN} together with the Haj\'{e}k-LeCam convolution theorem provides the
asymptotically minimum variance within the classes of regular estimators of $\vartheta_0$.
This class of estimators $\widetilde\vartheta_n$ is defined by the property that they are
asymptotically unbiased under
$\vartheta_n=\vartheta_n(\xi)=\vartheta_0+\xi(n\abs{c_n})^{-1/2}$ with
$\vartheta_0\in(0,1)$ for any $\xi\in\R$, precisely,
\[
(n\abs{c_n})^{1/2}\left(\widetilde\vartheta_n-\vartheta_n\right)
\to_{D_{\vartheta_n}}Q_{\vartheta_0},
\]
where the limit distribution $Q_{\vartheta_0}$ does not depend on $\xi$; see, e.g.
Sections~8.4 and 8.5 in \citet{pfanz94}.

By LeCam's first lemma (see, e.g., \citet[Chapter~3, Theorem~1]{lecy90}) we obtain that
under $\vartheta_n=\vartheta_n(\xi)$
\begin{align*}
L_{n,c_n}(\vartheta_n\mid\vartheta_0)&=\xi
LZ_n-\frac{\xi^2L^2\vartheta_0}2+o_{P_{\vartheta_n}}(1)\\
&\to_{D_{\vartheta_n}}N\left(\frac{\xi^2L^2\vartheta_0}2,
\xi^2L^2\vartheta_0\right)
\end{align*}
with
\begin{equation} \label{eqn:asymptotic_distribution_of_central_sequence_u_alternative}
Z_n\to_{D_{\vartheta_n}}N(\xi L\vartheta_0,\vartheta_0).
\end{equation}

An efficient estimator of $\vartheta_0$ within the class of regular estimators has necessarily
the minimum limiting variance
\[
\sigma^2_{\mathrm{minimum}}=\frac1{L^2\vartheta_0},
\]
which is the inverse of the limiting variance of the \textit{central sequence} $LZ_n$ under
$\vartheta_0$ (\citet[Theorem 8.4.1]{pfanz94}).

Consider the estimator
\[
\widehat\vartheta_n:=\frac{\tau(n)}{n\abs{c_n}}.
\]
Then we have with
$\vartheta_n=\vartheta_n(\xi)=\vartheta_0+\xi(n\abs{c_n})^{-1/2}$
\[
(n\abs{c_n})^{1/2}\left(\widehat\vartheta_n-\vartheta_n\right) =
(n\abs{c_n})^{1/2}\left(\frac{\tau(n)}{n\abs{c_n}}-\vartheta_0\right)-\xi=Z_n-\xi.
\]
The estimator $\widehat\vartheta_n$ is, consequently, not a regular estimator since we
have by \eqref{eqn:asymptotic_distribution_of_central_sequence_u_alternative}
\[
(n\abs{c_n})^{1/2}\left(\widehat\vartheta_n-\vartheta_n\right) =
Z_n-\xi
\to_{D_{\vartheta_n}}N\left(\xi(L\vartheta_0-1),\vartheta_0\right),
\]
where the limiting distribution depends on $\xi$ unless $L=1/\vartheta_0$. Its asymptotic
relative efficiency, defined as the ratio of the limiting variances under $\vartheta_0$ is
\[
ARE(\vartheta_0) = \frac{\vartheta_0}{\sigma^2_{\mathrm{minimum}}} =
L^2\vartheta_0^2.
\]
Recall that $L=1/\vartheta_0$ if $\bfX$ follows a GPP or if $\bfX$ is in a neighborhood of a
GPP as in \eqref{eqn:generation_of_process_in_neighborhood_of_gpp_for_lan} and, thus,
$\widehat\vartheta_n$ is in this case regular and asymptotically efficient.

\begin{cor}
Suppose in addition to the conditions of Theorem \ref{thm:LAN} that
 $\bfX$ is a GPP or it is in a neighborhood of a GPP as in \eqref{eqn:generation_of_process_in_neighborhood_of_gpp_for_lan}. Then
$\widehat\vartheta_n=\tau(n)/(n\abs{c_n})$, $n\in\N$, is a regular
estimator sequence with asymptotic minimum variance $\vartheta_0$ within the
class of regular estimators.
\end{cor}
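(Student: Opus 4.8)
The plan is to specialize the asymptotic distribution of $\widehat\vartheta_n$ already obtained in the discussion preceding the statement to the value $L=1/\vartheta_0$, and then to read off both regularity and optimality directly. The essential input is the observation, recorded when condition \eqref{cond:differentiable_spectral_neighborhood} was introduced, that $L=1/\vartheta_0$ holds in both cases under consideration: for a GPP condition \eqref{cond:differentiable_spectral_neighborhood} is satisfied with $L=1/\vartheta_0$ and $r_{\vartheta_0}=0$, while for the process generated as in \eqref{eqn:generation_of_process_in_neighborhood_of_gpp_for_lan} the same value $L=1/\vartheta_0$ is forced by the assumed expansion of the df $H$ with differentiable remainder. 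Consequently all conclusions of Theorem \ref{thm:LAN} and of LeCam's first lemma apply with this particular $L$.

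First I would establish regularity. From the elementary identity $(n\abs{c_n})^{1/2}(\widehat\vartheta_n-\vartheta_n)=Z_n-\xi$ together with the limit \eqref{eqn:asymptotic_distribution_of_central_sequence_u_alternative} for $Z_n$ under $\vartheta_n=\vartheta_n(\xi)$, we obtain
\[
(n\abs{c_n})^{1/2}\left(\widehat\vartheta_n-\vartheta_n\right)\to_{D_{\vartheta_n}}N\left(\xi(L\vartheta_0-1),\vartheta_0\right).
\]
Inserting $L=1/\vartheta_0$ makes the mean $\xi(L\vartheta_0-1)=0$, so the limit law is $N(0,\vartheta_0)$, which does not depend on $\xi$. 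By the definition of a regular estimator sequence---asymptotic unbiasedness under $\vartheta_n(\xi)$ with a limit distribution not depending on $\xi$---this shows that $\widehat\vartheta_n$ is regular, with $Q_{\vartheta_0}=N(0,\vartheta_0)$.

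It then remains to identify the limiting variance with the efficiency bound. The Haj\'{e}k-LeCam convolution theorem, applied through Theorem \ref{thm:LAN}, yields the minimum attainable variance $\sigma^2_{\mathrm{minimum}}=1/(L^2\vartheta_0)$ within the class of regular estimators. Substituting $L=1/\vartheta_0$ gives $\sigma^2_{\mathrm{minimum}}=\vartheta_0$, which coincides exactly with the limiting variance of $\widehat\vartheta_n$ found above. Hence $\widehat\vartheta_n$ attains the bound and is asymptotically efficient. I do not expect a genuine obstacle, since all analytic content is already carried by Theorem \ref{thm:LAN} and LeCam's first lemma; the only point deserving care is the verification of $L=1/\vartheta_0$ in the neighborhood case, which rests on the differentiability of the remainder $r$ and the bound $r'(u)=o(u^\delta)$ imposed on the df $H$.
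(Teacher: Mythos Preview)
Your proposal is correct and mirrors the paper's own argument, which is given in the discussion immediately preceding the corollary: specialize to $L=1/\vartheta_0$, use $(n\abs{c_n})^{1/2}(\widehat\vartheta_n-\vartheta_n)=Z_n-\xi\to_{D_{\vartheta_n}}N(\xi(L\vartheta_0-1),\vartheta_0)=N(0,\vartheta_0)$ to obtain regularity, and compare with $\sigma^2_{\mathrm{minimum}}=1/(L^2\vartheta_0)=\vartheta_0$ to conclude efficiency.
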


A regular estimator sequence can in general be obtained as follows. Suppose that
$\vartheta_n^*$ is a solution of the equation
\[
P_{\vartheta_n^*}(\bfX>c_n)=\frac{\tau(n)}n.
\]
Since $\tau(n)$ is under $\vartheta_0$ binomial $B(n,P_{\vartheta_0}(\bfX>c_n))$
distributed, $\vartheta_n^*$ is, actually, the maximum likelihood estimator of
$\kappa_0=P_{\vartheta_0}(\bfX>c_n)$ for the family
$\set{B(n,\kappa)=B(n,P_{\vartheta}(\bfX>c_n)):\,\vartheta\in(0,1)}$. We suppose
consistency of the sequence $\vartheta_n^*$, $n\in\N$. Then we obtain from condition
\eqref{cond:differentiable_spectral_neighborhood} the expansion
\begin{align*}
\frac{\tau(n)}n&=P_{\vartheta_n^*}(\bfX>c_n)\\
&=\int_0^1\left(1+L(\vartheta_n^*-\vartheta_0)+r_{\vartheta_0}
(u,\vartheta_n^*,c_n) \right)f_{\vartheta_0,c_n}(u)\,du\\
&=\left(1+L(\vartheta_n^*-\vartheta_0)+o_{P_{\vartheta_0}}\left(\abs{
\vartheta_n^*-\vartheta_0}\right)+O\left(\abs{c_n}^\gamma\right)\right)
P_{\vartheta_0}(\bfX>c_n),
\end{align*}
which implies
\[
(n\abs{c_n})^{1/2}\left(\vartheta_n^*-\vartheta_0\right)=
\frac1{L\vartheta_0}Z_n+o_{P_{\vartheta_0}}(1).
\]

As a consequence we obtain from \eqref{eqn:asymptotic_normality_of_Z_n_under_theta_0}
and \eqref{eqn:asymptotic_distribution_of_central_sequence_u_alternative} with
$\vartheta_n=\vartheta_n(\xi)$
\[
(n\abs{c_n})^{1/2}\left(\widehat\vartheta_n^*-\vartheta_n\right)
\to_{D_{\vartheta_n}}N\left(0,\frac1{L^2\vartheta_0^2}\right),
\]
and, thus, $\vartheta_n^*$, $n\in\N$, is a regular estimator sequence with asymptotic
minimum variance.

\bibliographystyle{enbib_arXiv}
\bibliography{evt}

\begin{thebibliography}{10}
\providecommand{\natexlab}[1]{#1}
\providecommand{\url}[1]{\texttt{#1}}
\providecommand{\urlprefix}{URL }
\providecommand{\selectlanguage}[1]{\relax}
\providecommand{\bibinfo}[2]{#2}
\providecommand{\href}[2]{#2}
\providecommand{\eprint}[2][]{\href{#1}{#2}}

\bibitem[{Aulbach et~al.(2011)Aulbach, Falk, and Hofmann}]{aulfaho11}
\bibinfo{author}{\textsc{Aulbach, S.}}, \bibinfo{author}{\textsc{Falk, M.}},
  and \bibinfo{author}{\textsc{Hofmann, M.}} (\bibinfo{year}{2011}).
\newblock \bibinfo{title}{On extreme value processes and the functional
  $D$-norm}.
\newblock \bibinfo{type}{Tech. Rep.}, \bibinfo{institution}{University of
  W{\"u}rzburg}.
\newblock \bibinfo{note}{Submitted},
  \eprint[http://arxiv.org/abs/1107.5136]{{\ttfamily arXiv:1107.5136
  [math.PR]}}.

\bibitem[{Buishand et~al.(2008)Buishand, de~Haan, and Zhou}]{buihz08}
\bibinfo{author}{\textsc{Buishand, T.~A.}}, \bibinfo{author}{\textsc{de~Haan,
  L.}}, and \bibinfo{author}{\textsc{Zhou, C.}} (\bibinfo{year}{2008}).
\newblock \bibinfo{title}{On spatial extremes: With application to a rainfall
  problem}.
\newblock \textit{\bibinfo{journal}{Ann. Appl. Stat.}}
  \textbf{\bibinfo{volume}{2}}, \bibinfo{pages}{624--642}.

\bibitem[{Falk(1998)}]{falk98}
\bibinfo{author}{\textsc{Falk, M.}} (\bibinfo{year}{1998}).
\newblock \bibinfo{title}{Local asymptotic normality of truncated empirical
  processes}.
\newblock \textit{\bibinfo{journal}{Ann. Statist.}}
  \textbf{\bibinfo{volume}{26}}, \bibinfo{pages}{692--718}.

\bibitem[{Falk(2011)}]{falk11}
\bibinfo{author}{\textsc{Falk, M.}} (\bibinfo{year}{2011}).
\newblock \bibinfo{title}{Local asymptotic normality in a stationary model for
  spatial extremes}.
\newblock \textit{\bibinfo{journal}{J. Multivariate Anal.}}
  \textbf{\bibinfo{volume}{102}}, \bibinfo{pages}{48--60}.

\bibitem[{Falk et~al.(2010)Falk, H{\"u}sler, and Reiss}]{fahure10}
\bibinfo{author}{\textsc{Falk, M.}}, \bibinfo{author}{\textsc{H{\"u}sler, J.}},
  and \bibinfo{author}{\textsc{Reiss, R.-D.}} (\bibinfo{year}{2010}).
\newblock \textit{\bibinfo{title}{Laws of Small Numbers: Extremes and Rare
  Events}}.
\newblock \bibinfo{edition}{3rd} ed. \bibinfo{publisher}{Birkh{\"a}user},
  \bibinfo{address}{Basel}.

\bibitem[{de~Haan and Ferreira(2006)}]{dehaf06}
\bibinfo{author}{\textsc{de~Haan, L.}}, and \bibinfo{author}{\textsc{Ferreira,
  A.}} (\bibinfo{year}{2006}).
\newblock \textit{\bibinfo{title}{Extreme Value Theory: An Introduction}}.
\newblock Springer Series in Operations Research and Financial Engineering.
  \bibinfo{publisher}{Springer}, \bibinfo{address}{New York}.

\bibitem[{de~Haan and Pereira(2006)}]{dehap06}
\bibinfo{author}{\textsc{de~Haan, L.}}, and \bibinfo{author}{\textsc{Pereira,
  T.~T.}} (\bibinfo{year}{2006}).
\newblock \bibinfo{title}{Spatial extremes: Models for the stationary case}.
\newblock \textit{\bibinfo{journal}{Ann. Statist.}}
  \textbf{\bibinfo{volume}{34}}, \bibinfo{pages}{146--168}.

\bibitem[{LeCam and Yang(1990)}]{lecy90}
\bibinfo{author}{\textsc{LeCam, L.}}, and \bibinfo{author}{\textsc{Yang,
  G.~L.}} (\bibinfo{year}{1990}).
\newblock \textit{\bibinfo{title}{Asymptotics in Statistics: Some Basic
  Concepts}}.
\newblock Springer Series in Statistics. \bibinfo{publisher}{Springer},
  \bibinfo{address}{New York}.

\bibitem[{Pfanzagl(1994)}]{pfanz94}
\bibinfo{author}{\textsc{Pfanzagl, J.}} (\bibinfo{year}{1994}).
\newblock \textit{\bibinfo{title}{Parametric Statistical Theory}}.
\newblock \bibinfo{publisher}{De Gruyter}, \bibinfo{address}{Berlin}.

\bibitem[{Reiss(1993)}]{reiss93}
\bibinfo{author}{\textsc{Reiss, R.-D.}} (\bibinfo{year}{1993}).
\newblock \textit{\bibinfo{title}{A Course on Point Processes}}.
\newblock \bibinfo{publisher}{Springer}, \bibinfo{address}{New York}.

\end{thebibliography}

\end{document}